\documentclass[11pt]{article}

\usepackage[a4paper, left=23mm,right=23mm, top=30mm, bottom=30mm]{geometry}
\usepackage{amsmath,amssymb,amsthm,mathtools}
\usepackage{hyperref}
\usepackage[nameinlink,noabbrev,capitalise]{cleveref}
\usepackage{enumitem}
\usepackage[dvipsnames]{xcolor}
\usepackage{graphicx}

\newtheorem{theorem}{Theorem}
\newtheorem{lemma}[theorem]{Lemma}
\newtheorem{corollary}[theorem]{Corollary}
\theoremstyle{definition}

\DeclareMathOperator{\mad}{mad}

\newcommand{\G}{\mathcal{G}}
\newcommand{\A}{\mathcal{A}}

\AddToHook{env/lemma/begin}{\crefalias{theorem}{lemma}}
\AddToHook{env/corollary/begin}{\crefalias{theorem}{corollary}}

\begin{document}

\title{Proof of the mad conjecture and its coloring applications}

\author{
Andrzej Grzesik\thanks{Faculty of Mathematics and Computer Science, Jagiellonian University, {\L}ojasiewicza 6, 30-348 Krak\'{o}w, Poland. E-mail: {\tt andrzej.grzesik@uj.edu.pl}.
Supported by the National Science Centre grant 2021/42/E/ST1/00193.}
\and
Lenka Kopfov\'a\thanks{Institute of Science and Technology Austria (ISTA). Am Campus 1, 3400 Klosterneuburg, Austria. E-mail: {\tt lenka.kopfova@ist.ac.at}. Supported by ERC Starting Grant ``RANDSTRUCT'' No. 101076777.
}
\and
Gaurav Kucheriya\thanks{Department of Applied Mathematics, Charles University, Czechia.
E-mail: {\tt gaurav@kam.mff.cuni.cz}.
Supported by GAČR grant 25-17377S and GAUK project 92125.}
\and
Binlong Li\thanks{School of Mathematics and Statistics, 
Xi’an-Budapest Joint Research Center for Combinatorics,
Northwestern Polytechnical University,
Xi’an, Shaanxi 710129, China.
E-mail: {\tt binlongli@nwpu.edu.cn}. Supported by the National Natural Science Foundation of China
(No. 12671417).}
\and
Magdalena Prorok\thanks{AGH University of Krakow, al. Mickiewicza 30, 30-059 Krak\'ow, Poland. 
E-mail: {\tt prorok@agh.edu.pl}.
Supported by the National Science Centre grant 2021/42/E/ST1/00193.}
}

\date{}

\maketitle

\begin{abstract}
For a finite graph $G$, the maximum average degree $\mad(G)$ is the largest average degree of a nonempty subgraph of $G$. Hendrey, Norin and Wood asked whether this parameter is partitionable, that is, whether for all positive reals $a,b$ every graph $G$ with $\mad(G)<a+b$ admits a vertex partition $V(G)=A\cup B$ with $\mad(G[A])<a$ and $\mad(G[B])<b$. In this note, we answer this question affirmatively. Moreover, we generalize this to an arbitrary number of partition classes.

The clustered chromatic number $\chi_\star(\G)$ of a graph class $\G$ is the minimum integer $k$ such that, for some integer $c$, every graph in $\G$ has a $k$-coloring in which every monochromatic component has at most $c$ vertices. We apply this result to show that $\chi_\star(\A_m)=\left\lfloor\frac{m}{2}\right\rfloor+1$, where $\A_m$ is the family of graphs $G$ with $\mad(G)\leq m$. This solves an open problem posed in Wood's survey and highlighted by Hendrey and Wood. 

As another application of our general partition result for $\mad$, we obtain a bound for relaxed colorings. Namely, for non-negative integers $d_1,\ldots,d_k$, every graph $G$ with \[\mad(G)<\sum_{i=1}^k \frac{2d_i+2}{d_i+2}\] admits a partition $V(G)=V_1\cup\ldots\cup V_k$ such that $\Delta(G[V_i])\leq d_i$ for each $i\in[k]$. In particular, this provides the first non-trivial bounds for $k\ge3$ with arbitrary $d_i$ and improves the previously known bound for $(d+1,d)$-colorings with $d \ge 2$.
\end{abstract}

\section{Introduction}
The \emph{maximum average degree} of $G$ is the maximum average degree taken over all (non-empty) subgraphs of $G$. Formally, let $V(G)$ denote the vertex set of $G$. For a subset of vertices $X \subseteq V(G)$, we let $G[X]$ denote the subgraph of $G$ induced by $X$, and let $e(G[X])$ denote its number of edges. The maximum average degree is defined as
\begin{equation*}
  \mad(G)
  :=
  \max_{\varnothing\ne X\subseteq V(G)}
  \frac{2e(G[X])}{|X|}.
\end{equation*}
We use the convention $\mad(H) = -\infty$ when $H$ has no vertices, so that the statements below also cover empty parts.

Our main result answers a question posed by Hendrey, Norin and Wood at the 2019 Barbados Graph Theory Workshop~\cite{CookSpirkl2019}. It was also conjectured by Nadara and Smulewicz~\cite[Conjecture~22]{NadaraSmulewicz2022}, who proved the special case $a \in \{1,2\}$. 

\begin{theorem}\label{thm:main}
For every graph $G$ and positive real numbers $a,b$, if $ \mad(G)<a+b,$ then there is a partition $V(G)=A\cup B$ such that $\mad(G[A])<a$ and $\mad(G[B])<b.$
\end{theorem}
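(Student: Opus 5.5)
The plan is to prove \cref{thm:main} by an extremal choice of partition together with an uncrossing argument for the submodular functions that certify $\mad$. For a real $t>0$ and $X\subseteq V(G)$ put $f_t(X):=t|X|-2e(G[X])$. Then $\mad(G[A])<t$ holds if and only if $f_t(X)>0$ for every nonempty $X\subseteq A$. The hypothesis says $f_{a+b}(X)>0$ for all nonempty $X$. Since $e(G[\cdot])$ is supermodular, i.e.\ $e(X\cup Y)+e(X\cap Y)\ge e(X)+e(Y)$, each $f_t$ is submodular. By finiteness I may first replace $a,b$ by slightly smaller reals $a',b'$ with $a'+b'>\mad(G)$, so all inequalities have a uniform slack. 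Call a nonempty $X\subseteq A$ \emph{$a$-bad} if $f_a(X)\le 0$, and define $b$-bad subsets of $B$ likewise.

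Among all partitions $V(G)=A\cup B$ with $\mad(G[A])<a$ (for example $A=\varnothing$), I choose one minimizing a potential. My first choice is $\Phi(A,B)=\min_{\varnothing\neq Y\subseteq B} f_b(Y)$, with ties broken by maximizing $|A|$; a variant worth trying is the weighted count $b\,e(G[A])+a\,e(G[B])$. Suppose, for contradiction, that $B$ contains a $b$-bad set. By submodularity, the union and intersection of two minimizers of $f_b$ over subsets of $B$ are again minimizers, provided the intersection is nonempty or is handled separately. Hence there is a canonical inclusion-minimal minimizer $Y$, and every proper nonempty subset $Z\subsetneq Y$ satisfies $f_b(Z)>f_b(Y)$. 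Minimality gives degree information inside $Y$: each $v\in Y$ has at least $b/2$ neighbours in $Y$, and more precisely $2\deg_Y(v)\ge b$ up to the discrepancy $f_b(Y)\le 0$.

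The main step is to move a suitable part of $Y$ into $A$, possibly moving some of $A$ back, so that $\mad(G[A])<a$ is preserved while $\Phi$ strictly improves. Take $v\in Y$ and try $A'=A\cup\{v\}$. If $A'$ gains an $a$-bad set $X\ni v$, then $f_a(X)\le 0<f_a(X\setminus\{v\})$. Uncrossing the $a$-bad sets through $v$ gives a maximal one, $X_v$. Then $X_v\cup Y$ (more precisely $(X_v\setminus\{v\})\cup Y$) satisfies
\[
f_{a+b}(X_v\cup Y)\le f_a(X_v)+f_b(Y)-2e(X_v\setminus\{v\},Y)+(\text{terms from the overlap at } v).
\]
The aim is to show that if every $v\in Y$ is blocked in this way, then summing, or taking $v$ to be a vertex with the fewest edges to $A$, forces $f_{a+b}(X_v\cup Y)\le 0$, contradicting $\mad(G)<a+b$. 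Equivalently, by the Hakimi/Frank orientation theorem, I can fix a fractional orientation of $G$ with every in-degree below $(a+b)/2$. Splitting each vertex's in-degree into an ``$A$-budget'' $a/2$ and a ``$B$-budget'' $b/2$ then lets me charge the edges of $X_v\cup Y$ against these budgets.

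I expect the main obstacle to be exactly this exchange step. A single-vertex move can destroy $a$-goodness, and the blocking sets $X_v$ for different $v$ can overlap in complicated ways. So I anticipate moving a whole set, namely a minimal $b$-tight $Y$ exchanged against the union of maximal $a$-tight sets it touches, rather than a single vertex. I also expect the potential will need to be a weighted combination, so that this exchange provably decreases it. If the direct charging fails, the fallback is to prove the stronger fractional-orientation statement, namely that an orientation with in-degrees $<(a+b)/2$ splits into vertex classes whose induced in-degrees are $<a/2$ and $<b/2$. I would prove it by induction on $|V(G)|$, deleting a vertex of minimum degree.
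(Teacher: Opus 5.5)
\textbf{There is a genuine gap: the decisive step is missing, and the routes you sketch for it do not work.} The setup is fine: submodularity of $f_t$, tight sets, and a fractional orientation with in-degrees $<(a+b)/2$ from Hakimi's theorem. But the exchange step you flag as the main obstacle is the whole content of the theorem, and your proposed way of closing it fails.

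Your displayed inequality hides the main terms. If $X_v\cap Y=\{v\}$, then exactly
\[
f_{a+b}(X_v\cup Y)=f_a(X_v)+f_b(Y)+b(|X_v|-1)+a(|Y|-1)-2e(X_v\setminus\{v\},Y\setminus\{v\}),
\]
so the ``overlap terms'' are the large positive quantity $b(|X_v|-1)+a(|Y|-1)$. An $a$-tight set and a $b$-tight set combine into an $(a+b)$-bad set only if roughly $\frac12(b|X_v|+a|Y|)$ edges join them. Nothing in the blocking condition supplies these edges, so uncrossing alone cannot reach a contradiction.

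The unweighted potential $b\,e(G[A])+a\,e(G[B])$ fails for a similar reason. Local optimality gives only $\deg_A(v)\le\frac{a}{a+b}\deg(v)$ for $v\in A$. This suffices when $\Delta(G)<a+b$, but not under $\mad(G)<a+b$, which allows vertices of large degree. (Also, with $\Phi=\min_Y f_b(Y)$ you would want to maximize, not minimize.)

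Your fallback is false as stated: a \emph{given} orientation with in-degrees $<(a+b)/2$ need not split. Take the cyclically oriented triangle with $a=b=2$. All in-degrees are $1<2$, yet a split would need both classes to induce in-degree $<1$, i.e.\ to be independent, which is impossible. Induction by deleting a minimum-degree vertex does not rescue this, because reinserting $v$ raises the in-degrees of its already-placed out-neighbours. Greedy placement works for acyclic orientations (topological order), but those exist with in-degrees $<(a+b)/2$ only under a degeneracy hypothesis, which loses a factor of $2$ against $\mad$.

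The missing idea is to choose the fractional orientation so that it comes from vertex weights. That is, it should have Bradley--Terry form $p_{uv}=\frac{c_u}{c_u+c_v}$ with $c_v>0$. The paper gets this in two moves: it perturbs a flow-based orientation with in-degrees $\le\frac12\mad(G)$ into $(0,1)$, then applies Zermelo/Ford (a Bradley--Terry model with the same score vector). This keeps $\sum_{u\in N(v)}\frac{c_u}{c_u+c_v}<\frac{a+b}{2}$. Then
\[
\Phi(A,B)=\sum_{uv\in E(G[A])\cup E(G[B])}\frac{c_uc_v}{c_u+c_v}-\frac a2\sum_{u\in A}c_u-\frac b2\sum_{u\in B}c_u
\]
is a genuine potential over \emph{all} partitions, with no need to restrict to $\mad(G[A])<a$. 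Take a minimizer and move a single $v\in A$ to $B$. Dividing by $c_v$ gives $\sum_{u\in N(v)\cap A}p_{uv}-\sum_{u\in N(v)\cap B}p_{uv}\le\frac{a-b}{2}$. Adding the in-degree bound yields $2\sum_{u\in N(v)\cap A}p_{uv}<a$. Summing over $X\subseteq A$ and using $p_{uv}+p_{vu}=1$ gives $2e(G[X])<a|X|$.

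The relation $p_{uv}/p_{vu}=c_u/c_v$ is exactly what lets one global function encode every vertex's $A$-budget and $B$-budget at once. Your budget-splitting idea is right in spirit, but it fails for orientations lacking this property, as the triangle shows.
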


Iterating \cref{thm:main} immediately gives the following multi-part form. 

\begin{corollary}\label{cor:multipart}
For all positive reals $c_1,\ldots,c_k$ and every graph $G$, if $\mad(G)<\sum_{i=1}^k c_i$, then there is a partition
$V(G) = V_1\cup\ldots\cup V_k$ such that $\mad(G[V_i])<c_i$ for every $i \in [k]$. 
\end{corollary}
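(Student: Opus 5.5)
We argue by induction on $k$, using \cref{thm:main} as the only ingredient. For $k=1$ there is nothing to prove: take $V_1=V(G)$, since $\mad(G)<c_1$ by assumption. (If one prefers to start at $k=2$, that case is exactly \cref{thm:main} with $a=c_1$ and $b=c_2$.)

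For the inductive step, let $k\ge 2$ and suppose $\mad(G)<\sum_{i=1}^k c_i$. Set $a=c_1$ and $b=\sum_{i=2}^k c_i$. Both are positive reals, and $\mad(G)<a+b$. By \cref{thm:main} there is a partition $V(G)=A\cup B$ with $\mad(G[A])<c_1$ and $\mad(G[B])<\sum_{i=2}^k c_i$.

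Next we apply the induction hypothesis (for $k-1$ parts) to the graph $G[B]$ with the reals $c_2,\ldots,c_k$. This gives a partition $B=V_2\cup\ldots\cup V_k$ with $\mad(G[B][V_i])<c_i$ for each $i\ge 2$. Since $G[B][V_i]=G[V_i]$, setting $V_1=A$ yields the required partition of $V(G)$.

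The only point needing care is the degenerate case where $A$ or $B$ is empty. The paper's convention $\mad(H)=-\infty$ for the empty graph makes all inequalities hold trivially for empty parts. Applying the induction hypothesis to the empty graph $G[B]$ simply returns empty parts $V_2,\ldots,V_k$, which is allowed. So there is no real obstacle; the whole argument rests on \cref{thm:main}.
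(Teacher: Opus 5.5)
Your proposal is correct and follows the paper's proof: induction on $k$, applying \cref{thm:main} with $a=c_1$ and $b=\sum_{i=2}^k c_i$, then the induction hypothesis to $G[B]$. Your remarks on the base case and on empty parts simply make explicit what the paper leaves implicit.
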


\begin{proof}
The result follows by induction on $k$. Apply \cref{thm:main} with $a=c_1$ and $b=\sum_{i=2}^k c_i$, and then apply the induction hypothesis to the second part.
\end{proof}

We apply \cref{cor:multipart} to solve an open problem about clustered coloring.
A coloring has \emph{clustering} $c$ if each monochromatic component has at most $c$ vertices. 
The \emph{clustered chromatic number} of a graph class $\G$, denoted by $\chi_\star(\G)$, is the minimum integer $k$ for which there exists an integer $c$ such that every graph in $\G$ has a $k$-coloring with clustering $c$. 
For example, if $\mathcal{C}$ is the class of all cycles, then $\chi_\star(\mathcal{C})=2$, since every cycle has a $2$-coloring in which every monochromatic component has at most two vertices, while no uniform clustering bound is possible with one color.

Let $\A_m$ denote the family of graphs $G$ with $\mad(G)\leq m$.
It follows from the work of Havet and Sereni~\cite[Section~3.2]{HavetSereni2006} that $\chi_\star(\A_m)\geq \left\lfloor\frac{m}{2}\right\rfloor+1$.
The best previously known upper bound, $\chi_\star(\A_m)\leq\left\lfloor\frac{2}{3}m\right\rfloor+1$, was established by Hendrey and Wood~\cite{HendreyWood2019} in 2019. Determining the exact value of $\chi_\star(\A_m)$ was stated as an open problem in~\cite{MR4336225} and was again highlighted by Hendrey and Wood~\cite{HendreyWood2019} as an intriguing open problem. Here, we improve the upper bound and prove the following theorem, which closes this gap.

\begin{theorem}\label{thm:clustered}
    For every positive real $m$, it holds that $\chi_\star(\A_m)=\left\lfloor\frac{m}{2}\right\rfloor+1$.
\end{theorem}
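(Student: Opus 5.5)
The lower bound $\chi_\star(\A_m)\geq\lfloor m/2\rfloor+1$ is already available from Havet and Sereni, so the plan is to prove the matching upper bound. Set $k=\lfloor m/2\rfloor+1$, so that $m<2k$. Split $m$ into $k$ positive reals strictly greater than the threshold needed, e.g.\ $c_i=2\cdot\frac{m'}{2k}$ where $m<m'<2k$; concretely, choose $\varepsilon>0$ with $m+k\varepsilon\le 2k$ and put $c_i=\frac{m}{k}+\varepsilon$, so $\sum c_i>m\ge \mad(G)$ and each $c_i<2$ when $\varepsilon$ is small (possible since $m/k<2$). By \cref{cor:multipart}, every $G\in\A_m$ has a partition $V_1\cup\dots\cup V_k$ with $\mad(G[V_i])<c_i<2$. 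In particular each $G[V_i]$ is a forest, but that alone does not bound clustering. The key point is that the bound $c_i$ is uniformly bounded away from $2$, which bounds the component sizes.

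The main step is the following observation. Let $c<2$ be fixed, and let $T$ be a tree with $\mad(T)<c$. Then $T$ itself has average degree $2(|T|-1)/|T|<c$, which gives $|T|<\frac{2}{2-c}$. Hence each monochromatic component, being a tree in some $G[V_i]$, has fewer than $\frac{2}{2-c_{\max}}$ vertices. Here $c_{\max}=\frac{m}{k}+\varepsilon$ depends only on $m$ (and the fixed choice of $\varepsilon$), so this gives a uniform clustering constant $c$ valid for all $G\in\A_m$. This yields a $k$-coloring with bounded clustering, which proves $\chi_\star(\A_m)\le k$.

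The only delicate point is the strictness. \cref{cor:multipart} needs $\mad(G)<\sum c_i$, while $\A_m$ allows $\mad(G)=m$. That is why the $c_i$ are taken slightly above $m/k$, and it works because $m/k<2$ strictly. This holds exactly because $k=\lfloor m/2\rfloor+1>m/2$. For completeness I would also recall the lower bound construction briefly, or simply cite Havet--Sereni as the paper does, to conclude equality.
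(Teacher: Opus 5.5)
Your proof is correct and follows the paper's argument. Like the paper, you apply \cref{cor:multipart} with $k$ equal parts lying strictly between $m/k$ and $2$ (the paper takes the explicit value $\frac{4k}{4k-m}$, paired with clustering $\lceil\frac{2k}{2k-m}\rceil$), and then bound component sizes by the average degree $2(n-1)/n$ of a spanning tree, as in \cref{lem:componentsize}. The only slip is that you need $m+k\varepsilon<2k$ strictly, not $\le$, to guarantee $c_i<2$, and you do state $c_i<2$ anyway.
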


\cref{cor:multipart} can also be applied to relaxed colorings.
For non-negative integers $d_1, d_2, \ldots, d_k$ we say that a graph $G$ is \emph{$(d_1, d_2, \ldots, d_k)$-colorable} if $V(G)$ can be partitioned into disjoint sets $V_1, V_2, \ldots, V_k$ such that, for each $i \in [k]$, the graph $G[V_i]$ has maximum degree at most $d_i$. For example, $K_3$ is $(1,0)$-colorable: two vertices may be placed in the first color class and the remaining vertex in the second, even though $K_3$ is not properly $2$-colorable.

We show (in \cref{lem:componentsize}) that a graph $H$ with $\mad(H) < \frac{2d+2}{d+2}$ has all components of order at most $d+1$ and so maximum degree at most $d$. 
Therefore, \cref{cor:multipart} and \cref{lem:componentsize} immediately imply the following theorem, which reproves several known results on relaxed colorings and improves some of them.

\begin{theorem}\label{thm:relaxedcoloring}
    For any non-negative integers $d_1, d_2, \ldots, d_k$ and every graph $G$ with 
    \[\mad(G) < \sum_{i=1}^k \frac{2d_i+2}{d_i+2}\]
    there is a partition $V(G) = V_1\cup\ldots\cup V_k$ such that, for every $i \in [k]$, all components of $G[V_i]$ have order at most $d_i+1$. In particular, $G$ is $(d_1, d_2, \ldots, d_k)$-colorable.
\end{theorem}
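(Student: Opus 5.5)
The plan is to apply \cref{cor:multipart} with $c_i=\frac{2d_i+2}{d_i+2}$ for $i\in[k]$. These are positive reals, and by hypothesis $\mad(G)<\sum_i c_i$, so we obtain a partition $V(G)=V_1\cup\ldots\cup V_k$ with $\mad(G[V_i])<\frac{2d_i+2}{d_i+2}$ for every $i$. It then remains to prove the component-size statement announced as \cref{lem:componentsize}: if $\mad(H)<\frac{2d+2}{d+2}$, then every component of $H$ has at most $d+1$ vertices. The maximum degree claim follows at once, since a vertex in a component of order at most $d+1$ has at most $d$ neighbours.

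To prove the lemma, first note that $\frac{2d+2}{d+2}<2$. Any cycle $C$ in $H$ would satisfy $2e(C)/|C|=2$, so $H$ is a forest. Now suppose some component $T$ has at least $d+2$ vertices. I would pick a subtree $T'\subseteq T$ with exactly $d+2$ vertices; this exists because we can grow a connected vertex set one neighbouring vertex at a time inside the connected tree $T$. The subgraph $H[V(T')]$ contains all $d+1$ edges of $T'$, so
\[
\mad(H)\ge \frac{2(d+1)}{d+2},
\]
which contradicts the assumption. Hence every component has order at most $d+1$.

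The main point needing care is the choice of exactly $d+2$ vertices. The function $n\mapsto 2(n-1)/n$ is increasing in $n$, so a tree of order $n$ reaches the threshold precisely when $n\ge d+2$; taking a connected subset of size exactly $d+2$ meets it with equality, which is enough because the hypothesis is strict. Empty parts $V_i$ are handled by the convention $\mad=-\infty$. The case $d_i=0$ gives threshold $1$: then $H$ has no edges, since a single edge already gives average degree $1$, which is consistent with components of order $1$.
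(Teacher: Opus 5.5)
Your proof is correct and matches the paper's argument: apply \cref{cor:multipart} with $c_i=\frac{2d_i+2}{d_i+2}$, then use \cref{lem:componentsize} with $c=d_i+1$, which the paper proves exactly as you do, by extracting a subtree on $d_i+2$ vertices from a component that is too large. Your preliminary observation that $H$ is a forest is harmless but unnecessary, since $H[V(T')]$ has at least $d+1$ edges in any case.
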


For $k=2$ and integers $d\ge j\ge0$, \cref{thm:relaxedcoloring} shows that every graph $G$ with
\[\mad(G) < 4 - \frac{2d+2j+8}{(d+2)(j+2)}\]
is $(d,j)$-colorable. 
For $d=j$ this is the bound of Havet and Sereni~\cite{HavetSereni2006}, and for $j=0$ it is the main result of Borodin, Ivanova, Montassier, Ochem and Raspaud in~\cite{d0-coloring}. In most other ranges stronger bounds are known: the sharp bound for $d\ge 2j+2$ of Borodin and Kostochka~\cite{dj-coloring}, the DP-coloring bounds for $j+2\le d\le 2j+1$ due to Jing, Kostochka, Ma, Sittitrai and Xu~\cite[Corollary~2.2]{JKMSX}, and improvements for particular small values of $j$. For $d=j+1\ge3$, however, \cref{thm:relaxedcoloring} improves the bound $\frac{2(2j^2+4j+1)}{j^2+3j+1}$ of~\cite[Corollary~2.2(4)]{JKMSX} to $4-\frac{4j+10}{(j+2)(j+3)}$.

\medskip
For $k \geq 3$, \cref{thm:relaxedcoloring} is the first non-trivial general result for $(d_1, d_2, \ldots, d_k)$-coloring with arbitrary $d_i$. It extends the non-list version of a theorem of Hendrey and Wood \cite[Theorem~2]{HendreyWood2019}, which covers the case $d_1 = \ldots = d_k$, as well as the results of~\cite{dd00-coloring,1100-coloring} for $d_i \in \{0,d\}$ discussed below. 
For instance, for any integers $a, b, d \geq 1$, every graph $G$ with 
\[\mad(G) < b + a\frac{2d+2}{d+2}\]
is $(d,d,\ldots,d, 0, 0, \ldots, 0)$-colorable with $a$ entries equal to $d$ and $b$ entries equal to $0$. This improves the bound $a+b+ \frac{da(a+1)}{(a+d+1)(a+1)+ab}$ of Dorbec, Kaiser, Montassier and Raspaud~\cite{dd00-coloring} and, for $d=1$, recovers the result of Kopreski and Yu~\cite{1100-coloring}. We note that this special case can also be derived by applying the result of Nadara and Smulewicz \cite{NadaraSmulewicz2022} $b$ times and then \cite[ Theorem~2]{HendreyWood2019}.

\section{Proof of the main result}\label{sec:main}

Before we proceed to the proof of \cref{thm:main}, we show the following lemma. 

\begin{lemma}\label{lem:labels}
For every graph $G$ and any real $q>\frac{1}{2}\mad(G)$, there exist positive real numbers $(c_v)_{v\in V(G)}$ such that for each $v\in V(G)$  
\[
    \sum_{u\in N(v)}\frac{c_u}{c_u+c_v}<q.
\]
\end{lemma}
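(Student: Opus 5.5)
The plan is to obtain the weights $c_v$ as the minimizer of a suitable convex function, so that the first-order optimality conditions are exactly the desired inequalities. Write $c_v=e^{y_v}$ with $y\in\mathbb{R}^{V(G)}$, and define the load $L_v(y)=\sum_{u\in N(v)}\frac{e^{y_u}}{e^{y_u}+e^{y_v}}$. The key observation is that for
\[
\Phi(y)=\sum_{uv\in E(G)}\log\bigl(e^{y_u}+e^{y_v}\bigr)
\]
we have $\partial\Phi/\partial y_v=\sum_{u\in N(v)}\frac{e^{y_v}}{e^{y_u}+e^{y_v}}=\deg(v)-L_v(y)$. Also, $\Phi$ is convex, being a sum of log-sum-exp functions.

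Fix a real $p$ with $\frac12\mad(G)<p<q$; this is possible since $q>\frac12\mad(G)$. Fix also any $\varepsilon>0$, and consider
\[
\Psi(y)=\Phi(y)-\sum_{v}(\deg(v)-p)\,y_v+\varepsilon\sum_v e^{-y_v}.
\]
This function is convex, since it adds a linear term and a convex term to $\Phi$. At any minimizer its gradient vanishes, which reads
\[
p-L_v(y)-\varepsilon e^{-y_v}=0 \quad\text{for every } v .
\]
Hence $L_v(y)=p-\varepsilon e^{-y_v}<p<q$, and $c_v=e^{y_v}>0$ are the required numbers. The exponential penalty is there because we cannot hope for the exact equality $L_v=p$ for all $v$. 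Indeed, $\sum_v L_v=e(G)$ always, while $p|V(G)|$ is in general different from $e(G)$, so without the penalty $\Psi$ would not be bounded below along the all-equal direction.

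The main obstacle is showing that $\Psi$ attains its minimum, i.e. that it is coercive. I would do this through the recession function: it suffices that $\Psi$ grows to $+\infty$ along every ray $y=td$, $t\to\infty$, with $d\ne0$.

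If some $d_v<0$, the term $\varepsilon e^{-td_v}$ grows exponentially. Every other term grows at most linearly in $t$, since $\Phi(td)\ge t\sum_{uv}\max(d_u,d_v)$ and $\Phi(td)\le t\sum_{uv}\max(d_u,d_v)+e(G)\log 2$. So $\Psi\to\infty$ in this case.

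If $d\ge0$ and $d\ne0$, the growth rate of $\Psi(td)$ is
\[
h(d)=\sum_{uv\in E}\max(d_u,d_v)-\sum_v(\deg(v)-p)d_v=p\sum_v d_v-\sum_{uv\in E}\min(d_u,d_v).
\]
Using the layer-cake decomposition with $S_t=\{v:d_v>t\}$, this equals
\[
h(d)=\int_0^\infty\bigl(p|S_t|-e(G[S_t])\bigr)\,dt .
\]
For each nonempty $S_t$ we have $e(G[S_t])\le\frac12\mad(G)|S_t|<p|S_t|$, and $S_t\neq\varnothing$ on a set of $t$ of positive measure. So $h(d)>0$, and $\Psi\to\infty$ along this ray as well.

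Since $\Psi$ is a continuous convex function with positive recession in every direction, its sublevel sets are bounded, so a minimizer exists. By the gradient computation above, this minimizer yields the desired $(c_v)_{v\in V(G)}$.
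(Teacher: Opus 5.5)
Your proof is correct, and it takes a genuinely different route from the paper's.

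The paper argues in two separate steps. First it uses the Nadara--Smulewicz flow network and max-flow/min-cut to get a fractional orientation $p'_{uv}+p'_{vu}=1$ with in-load at most $\frac12\mad(G)$ at every vertex. It perturbs this so that all $p_{uv}\in(0,1)$ and all loads are $<q$. It then quotes Ford's theorem on the Bradley--Terry model, applied componentwise and using connectivity, to find strengths $c_v$ that reproduce these loads exactly.

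You do everything in one convex variational problem:
\begin{itemize}
\item $\Phi$ is the Bradley--Terry log-partition term, whose gradient produces the loads.
\item The linear tilt encodes the target level $p$.
\item The barrier $\varepsilon\sum_v e^{-y_v}$ absorbs the mismatch between $p|V(G)|$ and $e(G)$.
\end{itemize}
As a result you never need prescribed win totals summing exactly to $e(G)$. You also need no preliminary feasible orientation and no separate treatment of components or isolated vertices. The combinatorial input enters only in your coercivity check. There the layer-cake identity reduces $h(d)>0$ to the density inequality $e(G[S])<p|S|$ for nonempty $S$, which is exactly the cut condition in the paper's flow argument.

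What each approach buys: your argument is self-contained and essentially inlines and merges the proofs of both cited results (Ford's theorem is itself usually proved by a likelihood/convexity argument of this kind). The paper's version is shorter on the page because it delegates to known theorems, and it gives the Bradley--Terry interpretation.

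Three small remarks on precision.
\begin{itemize}
\item In the case where some $d_v<0$, other penalty terms $\varepsilon e^{-td_w}$ may also grow exponentially. The accurate statement is that all remaining terms are bounded below by a linear function of $t$, which holds because the penalties are nonnegative.
\item Passing from ``$\Psi\to\infty$ along every ray through the origin'' to bounded sublevel sets uses convexity and continuity. Normalize an unbounded sequence in a sublevel set, pass to a limit direction, and use convexity on the segments from $0$. This is standard but deserves a line.
\item Choosing $p$ strictly between $\frac12\mad(G)$ and $q$ is unnecessary. Since your stationarity condition gives $L_v=p-\varepsilon e^{-y_v}<p$, you could simply take $p=q$.
\end{itemize}
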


\begin{proof}
We may assume that $G$ is connected, since otherwise it is enough to apply the argument below separately to each connected component of $G$ (for a component consisting of a single vertex $v$ we simply set $c_v = 1$), using the fact that $\mad(H) \leq \mad(G)$ for every subgraph $H$ of $G$.

For a graph $G$, define a network $\mathcal F(G)$ as follows. Its nodes consist of a source $s$, a sink $t$, one node $x_{uv}$ for each edge $uv\in E(G)$, and one node for each vertex of $G$. For each edge $uv\in E(G)$, there is an arc $sx_{uv}$ of capacity $1$ and arcs $x_{uv}u$ and $x_{uv}v$ of infinite capacity. Finally, for each $v\in V(G)$, there is an arc $vt$ of capacity $\frac{1}{2}\mad(G)$. See \cref{fig:flow-network} for an example.

\begin{figure}[ht]
    \centering
    \includegraphics[width=0.9\textwidth]{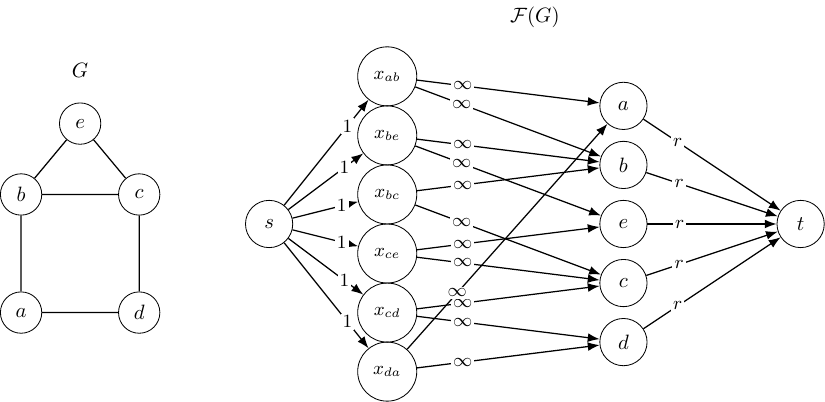}
    \caption{A graph $G$ and the network $\mathcal F(G)$, where $r= \frac{1}{2}\mad(G)$.}
    \label{fig:flow-network}
\end{figure}

Nadara and Smulewicz \cite{NadaraSmulewicz2022} proved that the network $\mathcal F(G)$ has a maximum $s$--$t$ flow of size $e(G)$.
Note that this also follows directly from the max-flow min-cut theorem: if $X$ is the set of vertices of $G$ on the source side of an $s$--$t$ cut of finite capacity, then every node $x_{uv}$ on the source side satisfies $u, v \in X$, so the capacity of the cut is at least $e(G) - e(G[X]) + \frac12 \mad(G)|X| \geq e(G)$. A maximum flow of size $e(G)$ saturates every arc leaving $s$, so letting $p'_{uv}$ be the flow along the arc $x_{uv}v$, we obtain numbers $p'_{uv}\in[0,1]$ such that $p'_{uv}+p'_{vu}=1$  for each edge $uv\in E(G)$, and 
\[\sum_{u\in N(v)}p'_{uv}\leq \frac{1}{2}\mad(G)\]
for each vertex $v\in V(G)$.

We now perturb the constants $p'_{uv}$ slightly so that none of them is equal to $0$ or $1$. 
Consider the assignment $p''_{uv}=\frac{1}{2}$ for each $uv\in E(G)$, and for $\varepsilon\in(0,1)$ let $p_{uv}=(1-\varepsilon)p'_{uv}+\varepsilon p''_{uv}$ for each $uv\in E(G)$. 
Then $p_{uv}\in(0,1)$ and $p_{uv}+p_{vu}=1$ for each $uv\in E(G)$, while for each $v\in V(G)$ 
\[\sum_{u\in N(v)}p_{uv}\leq(1-\varepsilon)\frac{1}{2}\mad(G)+\varepsilon\frac{d(v)}{2}.\]
As the graph is finite
and $q>\frac{1}{2}\mad(G)$, by taking $\varepsilon>0$ sufficiently small we may assume that for each $v\in V(G)$ 
\begin{equation}\label{eq:BTmodel}
\sum_{u\in N(v)}p_{uv}<q.
\end{equation}

Such an assignment can be viewed as a Bradley–Terry model for paired comparisons in which the players are the vertices of $G$, the compared pairs are the edges of $G$, and, for every $uv \in E(G)$, player $u$ beats player $v$ with probability $p_{uv}$. This model was first studied by Zermelo \cite{Zermelo1929} in~1929, and independently by Bradley and Terry \cite{BradleyTerry1952} in 1952. As $p_{uv}\in(0,1)$ for every $uv\in E(G)$ and $G$ is connected, for every partition of $V(G)$ into two non-empty parts some player of each part beats some player of the other part with positive probability. Hence, by the result of Ford \cite{Ford1957}, one can assign a value $c_v>0$ to each $v \in V(G)$ such that 
\[\sum_{u \in N(v)} p_{uv} = \sum_{u \in N(v)} \frac{c_u}{c_u+c_v}\]
for every $v \in V(G)$. 
For an explicit proof see, e.g., \cite[Corollary~1]{SchnakenbergPenn2014}. 
By \eqref{eq:BTmodel}, the constants $c_v$ satisfy the conditions of the lemma.
\end{proof}

The main idea in the proof of \cref{thm:main} is to define a suitable potential function and show that the partition minimizing this function has the properties we wish for.

\begin{proof}[Proof of \cref{thm:main}]
Applying \cref{lem:labels} with $q = \frac{1}{2}(a+b)$, which
satisfies $q > \frac12\mad(G)$ as $\mad(G) < a + b$, we obtain positive real numbers $c_v$, for each $v\in V(G)$, such that 
\begin{equation}\label{eq:label-bound}
    \sum_{u\in N(v)}\frac{c_u}{c_u+c_v} < \frac{a+b}{2}
\end{equation}
for every $v\in V(G)$. Let 
\[w(uv) = \frac{c_uc_v}{c_u+c_v}\]
for every $uv \in E(G)$.

Define the \emph{potential} of a partition $V(G) = A \cup B$ by 
\[
\Phi({A,B}) \coloneqq \sum_{uv\in E(G[A])\cup E(G[B])} w(uv) - \frac{a}{2}\sum_{u \in A} c_u - \frac{b}{2}\sum_{v\in B} c_v.
\]

Among all partitions $V(G)=A\cup B$, choose one that minimizes $\Phi({A,B})$.
For any $v \in A$, consider the partition obtained by moving $v$ from $A$ to $B$, namely
$A'\coloneqq A\setminus\{v\}$, $B'\coloneqq B\cup\{v\}.$
By the minimality of \(\Phi(A,B)\), we have
\[
    \Phi(A,B)\leq \Phi(A',B').
\]
Only the terms corresponding to edges incident with \(v\), together with
the term involving \(c_v\), change when \(v\) is moved. Consequently,
\[
    \sum_{u \in N(v) \cap A} w(uv)-\frac{a}{2}c_v
    \leq
    \sum_{u \in N(v) \cap B} w(uv)-\frac{b}{2}c_v.
\]
Dividing by $c_v > 0$ and using the definition of $w(uv)$, we obtain
\[
    \sum_{u \in N(v) \cap A}
        \frac{c_u}{c_u+c_v}-\frac{a}{2}
    \leq
    \sum_{u \in N(v) \cap B}
        \frac{c_u}{c_u+c_v}-\frac{b}{2}.
\]
Equivalently,
\[
    \sum_{u \in N(v) \cap A} \frac{c_u}{c_u+c_v}- \sum_{u \in N(v) \cap B} \frac{c_u}{c_u+c_v}
    \leq
    \frac{a-b}{2}.
\]
On the other hand, by \eqref{eq:label-bound},
\[
    \sum_{u \in N(v) \cap A} \frac{c_u}{c_u+c_v} + \sum_{u \in N(v) \cap B} \frac{c_u}{c_u+c_v}
    =
    \sum_{u \in N(v)} \frac{c_u}{c_u+c_v}
    < \frac{a+b}{2}.
\]
Adding the preceding two inequalities gives
\[
    2\sum_{u \in N(v) \cap A}
        \frac{c_u}{c_u+c_v}
    < a
\]
for every $v \in A$.
The case \(v\in B\) is symmetric, so we also have 
\[
    2\sum_{u \in N(v) \cap B}
        \frac{c_u}{c_u+c_v}
    < b
\]
for every $v \in B$.
Using the above inequalities, we show that $\mad(G[A])<a$ and $\mad(G[B])<b$.
For any non-empty set $X \subseteq A$, we have 
\begin{align*}
    2e(G[X])
    &= 2\sum_{uv\in E(G[X])}
        \left(\frac{c_u}{c_u+c_v}+\frac{c_v}{c_u+c_v}\right) \\
    &= 2\sum_{v\in X}
        \sum_{u \in N(v) \cap X}
        \frac{c_u}{c_u+c_v} \\
    &\leq \sum_{v\in X}
        2\sum_{u \in N(v) \cap A}
        \frac{c_u}{c_u+c_v} \\
    &< \sum_{v\in X} a
     = a|X|.
\end{align*}
Therefore, $\mad(G[A])<a$. 
Similarly, it follows that for any non-empty set $Y \subseteq B$, $2e(G[Y]) < b|Y|$ and $\mad(G[B])<b$. 
Thus the partition \(V(G)=A \cup B\) has the desired properties.
\end{proof}

\section{Applications in graph colorings}\label{sec:coloring}

We start the section with a straightforward lemma, which together with \cref{cor:multipart} immediately implies \cref{thm:relaxedcoloring}.

\begin{lemma}\label{lem:componentsize}
    For any positive integer $c$, every graph $H$ with $\mad(H)<\frac{2c}{c+1}$ has all components of order at most $c$.
\end{lemma}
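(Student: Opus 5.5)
We argue by contradiction. Suppose $H$ has $\mad(H)<\frac{2c}{c+1}$ but some component $K$ of $H$ has more than $c$ vertices. The plan is to exhibit a subgraph of $K$ on exactly $c+1$ vertices whose average degree is at least $\frac{2c}{c+1}$, which contradicts the definition of $\mad(H)$.

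First, since $K$ is connected and $|V(K)|\ge c+1$, we can find a connected subgraph on exactly $c+1$ vertices. One way is to take a spanning tree $T$ of $K$ and repeatedly delete leaves until exactly $c+1$ vertices remain. Alternatively, grow a set $X$ from a single vertex by repeatedly adding a neighbour of the current set, which is possible while $|X|<|V(K)|$, and stop once $|X|=c+1$. In either case we obtain $X\subseteq V(H)$ with $|X|=c+1$ and $H[X]$ connected.

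Second, a connected graph on $c+1$ vertices has at least $c$ edges, since it contains a spanning tree. Hence $e(H[X])\ge c$, and therefore
\[
\mad(H)\ \ge\ \frac{2e(H[X])}{|X|}\ \ge\ \frac{2c}{c+1},
\]
contradicting the hypothesis. So every component of $H$ has order at most $c$.

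No step here is a genuine obstacle; the only point needing care is the existence of a connected induced subgraph on exactly $c+1$ vertices, and the growing argument settles it. For the application to \cref{thm:relaxedcoloring}, take $c=d_i+1$: then $\frac{2c}{c+1}=\frac{2d_i+2}{d_i+2}$. A component of order at most $d_i+1$ has maximum degree at most $d_i$, so each part $G[V_i]$ from \cref{cor:multipart} has $\Delta(G[V_i])\le d_i$.
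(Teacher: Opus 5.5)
Your proof is correct and follows essentially the same route as the paper: a component with at least $c+1$ vertices yields a connected (tree) subgraph on exactly $c+1$ vertices, whose at least $c$ edges force average degree at least $\frac{2c}{c+1}$. The only cosmetic difference is that you bound $2e(H[X])/|X|$ directly from the definition of $\mad$, whereas the paper uses $\mad(H)\ge\mad(T)=\frac{2c}{c+1}$ for a tree $T\subseteq H$ on $c+1$ vertices.
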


\begin{proof}
    Suppose otherwise. Then some component of $H$ has at least $c+1$ vertices, and hence $H$ contains a tree $T$ on exactly $c+1$ vertices. Then \[\mad(H)\geq\mad(T)=\frac{2c}{c+1},\] a contradiction. 
\end{proof}

Since $\chi_\star(\A_m)\geq\left\lfloor\frac{m}{2}\right\rfloor+1$ by the result of Havet and Sereni~\cite{HavetSereni2006}, in order to show \cref{thm:clustered} 
it is enough to prove the following statement. 

\begin{theorem}
    For every positive real $m$, let $k=\left\lfloor\frac{m}{2}\right\rfloor+1$ and $c=\left\lceil\frac{2k}{2k-m}\right\rceil$. Every graph $G$ with $\mad(G)\leq m$ has a $k$-coloring with clustering $c$.
\end{theorem}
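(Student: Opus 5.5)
We combine \cref{cor:multipart} with \cref{lem:componentsize}. Since $k=\lfloor m/2\rfloor+1>m/2$, we have $2k>m$, so $c=\lceil 2k/(2k-m)\rceil$ is a well-defined positive integer. We aim to split $V(G)$ into $k$ classes, each inducing a subgraph with $\mad<\frac{2c}{c+1}$. By \cref{lem:componentsize}, every monochromatic component then has at most $c$ vertices, which is the required clustering.

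To invoke \cref{cor:multipart} with $c_1=\dots=c_k=\frac{2c}{c+1}$, we need the strict inequality
\[
\mad(G)\le m<k\cdot\frac{2c}{c+1}.
\]
This rearranges to $m(c+1)<2kc$, that is, $c(2k-m)>m$, or $c>\frac{m}{2k-m}$. By the choice of $c$,
\[
c\ge \frac{2k}{2k-m}=\frac{m}{2k-m}+1>\frac{m}{2k-m},
\]
so the inequality holds, and in fact with room to spare.

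\cref{cor:multipart} therefore gives a partition $V(G)=V_1\cup\dots\cup V_k$ with $\mad(G[V_i])<\frac{2c}{c+1}$ for every $i$. Coloring $V_i$ with color $i$ and applying \cref{lem:componentsize} to each $G[V_i]$ finishes the proof.

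The only potential obstacle is checking that the inequality is strict even when $m$ is an even integer or $\mad(G)=m$ exactly. The computation above covers this, because the slack in the choice of $c$ is at least $1$. The hypothesis that $m$ is positive guarantees $k\ge1$, and $c$ is positive by construction.
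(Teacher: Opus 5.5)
Your proof is correct and is essentially the paper's argument: \cref{cor:multipart} with $k$ equal parts, followed by \cref{lem:componentsize} on each part. The only difference is cosmetic. You take the part bound to be $\frac{2c}{c+1}$ directly and verify $m<k\cdot\frac{2c}{c+1}$ through $c>\frac{m}{2k-m}$, whereas the paper first applies the corollary with the intermediate bound $\frac{4k}{4k-m}$ and then checks that $\frac{2c}{c+1}\ge\frac{4k}{4k-m}$.
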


\begin{proof}
    Note that since $2k>m$, we have $(2k-m)^2>0$ and $4k-m>0$, hence $4k^2 > m(4k-m)$, that is $\frac{4k}{4k-m}\cdot k>m$. Thus, $\mad(G)<\frac{4k}{4k-m}\cdot k$. By \cref{cor:multipart} there exists a partition $V_1\cup\ldots\cup V_k$ such that $\mad(G[V_i])<\frac{4k}{4k-m}$ for every $i\in[k]$.

    Since $\frac{2c}{c+1}$ is an increasing function of $c$ and $c \ge \frac{2k}{2k-m}$, we have 
    \[\frac{2c}{c+1} \ge \frac{\frac{4k}{2k-m}}{\frac{2k}{2k-m}+1} = \frac{4k}{4k-m}.\] 
    
Hence, $\mad(G[V_i])<\frac{2c}{c+1}$ for every $i\in[k]$. By \cref{lem:componentsize}, each component of $G[V_i]$, for every $i\in[k]$, has at most $c$ vertices, as needed.
\end{proof}

\section{Concluding remarks}

Note that \cref{thm:main} is tight: for positive integers $a$, $b$ the assumption $\mad(G) < a+b$ cannot be weakened to $\mad(G) \leq a+b$. Indeed, by considering $G$ to be the complete graph on $a+b+1$ vertices, we have $\mad(G)=a+b$. But for any vertex partition $V(G)=A\cup B$, either $|A|\geq a+1$ or $|B|\geq b+1$, and since $G[A]$ and $G[B]$ are complete graphs, this implies that either $\mad(G[A])\geq a$ or $\mad(G[B])\geq b$, respectively.

\section*{Acknowledgements}
We thank the organizers of the Emléktábla Workshop, where this work was initiated, as well as Laurentiu Ioan Ploscaru for suggesting the problem that inspired it.

\paragraph{Generative AI statement.} The potential function used in the proof of \cref{thm:main} was initially suggested by ChatGPT-5.6 Sol. However, the proof generated by the model relied on a technically involved lemma whose validity was difficult to verify. The authors subsequently developed the proof presented in the final manuscript and take full responsibility for the correctness and content of the paper.

\bibliographystyle{abbrv}
\bibliography{MAD}
\end{document}